\definecolor{darkgreen}{rgb}{0, 0.5, 0}
\newtheorem{theorem}{Theorem}
\newtheorem{lemma}{Lemma}
\newtheorem{corollary}{Corollary}
\newtheorem{Ex}{Example}
\newtheorem*{theorem*}{Theorem}
\newtheorem{remark}{Remark}
\newcommand{\ind}{{\rm ind \hspace{.1cm}}}
\newcommand{\C}{\mathbb{C}}
\newcommand{\g}{\mathfrak{g}}
\newcommand\blfootnote[1]{%
  \begingroup
  \renewcommand\thefootnote{}\footnote{#1}%
  \addtocounter{footnote}{-1}%
  \endgroup
}
\begin{document}

\title{Contact seaweeds }

\author[*]{Vincent E. Coll, Jr.}
\author[*]{Nicholas Mayers}
\author[*]{Nicholas Russoniello}
\author[$\dagger$] {Gil Salgado}

\affil[*]{Department of Mathematics, Lehigh University, Bethlehem, PA, 18015}
\affil[$\dagger$]{Facultad de Ciencias, Universidad Autonoma de San Luis Potos\'{i}; SLP, M\'{e}xico}

\maketitle
\begin{abstract}
\noindent
A ($2k+1$)$-$dimensional contact Lie algebra is one which admits a one-form 
$\varphi$ such that $\varphi \wedge (d\varphi)^k\ne0$.  Such algebras have index one, but this is not generally a sufficient condition. Here we show that index-one type-$A$ seaweed algebras are necessarily contact. Examples, together with a method for their explicit construction, are provided.
\end{abstract}

\noindent
\textit{Mathematics Subject Classification 2020}: 17Bxx, 53D10

\noindent 
\textit{Key Words and Phrases}: contact Lie algebra, contact structure, Frobenius Lie algebra, seaweeds, meanders, regular one-forms 

\blfootnote{Nicholas Mayers,  nwm5095@gmail.com, is the corresponding author.  Gil Salgado would like to acknowledge the support received through CONACYT Grant \#A1-S-45886 and
PRODEP grant UASLP-CA-228.
}


\section{Introduction}

The \textit{index} of a Lie algebra $(\g, [-,-])$ is an important algebraic invariant which was first formally introduced by Dixmier (\textbf{\cite{D}}, 1977). It is defined by 

\[
\ind \g=\min_{\varphi\in \mathfrak{g}^*} \dim  (\ker (B_\varphi)),
\]
where $\varphi$ is an element of the linear dual $\mathfrak{g}^*$  and $B_\varphi$ is the associated skew-symmetric \textit{Kirillov form} defined by 
\[
B_\varphi(x,y)=\varphi([x,y]), \textit{ for all }~ x,y\in\g. 
\]

\noindent
On a given Lie algebra $\g$, index-realizing linear forms, i.e., those $\varphi\in\mathfrak{g}^*$ which satisfy $\dim(\ker(B_{\varphi}))=\ind\mathfrak{g},$ are called \textit{regular} and exist in profusion, being dense in both the Zariski and Euclidean topolgies of $\g^*$ (see \textbf{\cite{DK}}).

\bigskip
\noindent
Using the above notation, the index is used to describe certain important classes of algebras.

\begin{itemize}

\item If dim $\mathfrak{g}=2n$ and if there exists a $\varphi$ such that $(d\varphi)^n\ne 0$, then $\mathfrak{g}$ is said to \textit{Frobenius}, and $\varphi$ is called a \textit{Frobenius form}. Deformation theorists are interested in Frobenius Lie algebras because each such $\mathfrak{g}$ provides a solution to the classical Yang-Baxter equation, which in turn quantizes to a universal deformation formula, i.e., a Drinfel'd twist which deforms any algebra which admits an action of $\mathfrak{g}$ by derivations (see \textbf{\cite{twist}}). A Lie algebra is Frobenius precisely when its index is zero.

\item If dim $\mathfrak{g}=2n+1$ and if there exists a $\varphi$ such that $\varphi  \wedge (d\varphi)^n\ne 0$, then $\mathfrak{g}$ is said to be $\textit{contact}$, $\varphi$ is called a \textit{contact form}, and $\varphi  \wedge (d\varphi)^n$ is a \textit{volume form} on the underlying Lie group. The construction and classification of contact manifolds is a central problem in differential topology (see \textbf{\cite{wein}}).
If a Lie algebra is contact, then its index is equal to one.  The converse is not true in general.\footnote{The Lie algebra $\g=\langle e_1,e_2,e_3\rangle$ with relations $[e_1,e_2]=e_2$ and $[e_1,e_3]=e_3$ has index one but is not contact.}  However,
there are a few important families of Lie algebras for which index one identifies if a given Lie algebra is contact. For example, index-one nilpotent Lie algebras and real, compact Lie algebras of index one are necessarily contact (see \textbf{\cite{RG}}).
\end{itemize}

\noindent
\begin{remark}\label{ex:ind1ncont}
Frobenius and contact Lie algebras are tightly interwoven.  Every Frobenius Lie algebra has a codimension-one contact ideal, and every Frobenius Lie algebra is a codimension-one ideal of a contact Lie algebra \textup(see \textup{\textbf{\cite{codim1}}}\textup).
\end{remark}


Here, we seek to classify contact algebras among a class of matrix algebras called \textit{seaweed algebras} (or simply, ``seaweeds'').  These algebras, along with their evocative name,  were first introduced by Dergachev and A. Kirillov in (\textbf{\cite{DK}}, 2000), where they defined such algebras as complex  subalgebras of $\mathfrak{gl}(n)$ preserving certain flags of subspaces 
of $\C^n$ developed from two compositions of $n$. The passage to seaweeds  of ``classical type" is realized by requiring that elements of the seaweed subalgebra of $\mathfrak{gl}(n)$ satisfy additional algebraic conditions. For example, the type-$A$ case ($A_{n-1}=\mathfrak{sl}(n,\mathbb{C})$) is defined by a vanishing trace condition.  Ongoing, we will assume that all Lie algebras are over $\mathbb{C}$.

\bigskip
We can now state the main result of this article, which asserts that index one is sufficient for seaweed subalgebras of $\mathfrak{sl}(n)=A_{n-1}$ to be contact.

\begin{theorem*}\label{main} If $\mathfrak{g}$ is a type-A seaweed, then $\mathfrak{g}$ is contact if and only if
$\ind\g=1.$
\end{theorem*}

The structure of the paper is as follows.  In Section~\ref{sec:seaweeds}, we define type-$A$ seaweeds and detail the construction of an associated planar graph, called a \textit{meander}, which is helpful in computing the seaweed's index. In Section~\ref{sec:framework}, we discuss a framework for constructing regular one-forms on type-$A$ seaweeds and explicitly compute the kernels of the associated Kirillov forms. In Section~\ref{sec:main}, we establish that the regular one-forms of Section~\ref{sec:framework}  are, in fact, contact, thus proving the main theorem. In Section~\ref{sec:examples}, the reader will find algebraic technology which can be used to generate a spate of both Frobenius and contact seaweeds of arbitrarily high dimension.

\section{Seaweeds and Meanders}\label{sec:seaweeds}
\noindent
A type-$A$ seaweed in its ``standard (matrix) form''\footnote{Since every seaweed is conjugate to one in standard form, we have presumed this in the definition of a type-$A$ seaweed in order to ease exposition.  A basis-free definition reckons seaweed subalgebras of a reductive Lie algebra $\mathfrak{g}$ as the intersection of two parabolic algebras whose sum is $\mathfrak{g}$ (see \textbf{\cite{Pan}}).  For this reason, seaweed algebras have elsewhere been called \textit{biparabolic} (see \textbf{\cite{Joseph}}).  We do not require the latter definition for our present discussion.} can be described as a subalgebra of $\mathfrak{sl}(n)$ constructed as follows.  First, fix two ordered compositions of $n$, $(a)=(a_1,\dots,a_m)$ and $(b)=(b_1,\dots,b_t)$. Let $D_{(a)}$ be the subalgebra of block-diagonal matrices whose blocks have sizes $a_1\times a_1,\dots,a_m\times a_m$ and similarly for $D_{(b)}$.  A \textit{type-A seaweed algebra} (or simply, ``type-A seaweed")  is the subalgebra of $\mathfrak{sl}(n)$ spanned by the intersection of $D_{(a)}$ with the lower-triangular matrices and the intersection of $D_{(b)}$ with the upper-triangular matrices. We call the the locations of potentially nonzero entries in the seaweed \textit{admissible locations}. For a type-$A$ seaweed defined by two compositions of $n$ as above, we write $\mathfrak{p}_n^A\;\frac{a_1|\cdots|a_m}{b_1|\cdots|b_t}$. See Example~\ref{ex:seaweed}.


\begin{remark} We tacitly assume a standard \textup(Chevalley\textup) basis for a seaweed $\g=\mathfrak{p}_n^A \frac{a_1|\dots|a_m}{b_1|\dots|b_t}$ given by the union of the following two sets of matrix units:

\begin{itemize}
    \item $\{e_{i,i}-e_{i+1,i+1} ~|~  1\leq i\leq n-1   \}$,  and
    \item $\{e_{i,j}~|~ 1\leq i\neq j\leq n$ and $(i,j)$ is an admissible location\}.
\end{itemize}
\end{remark}

\begin{Ex}\label{ex:seaweed}
We illustrate a type-$A$ seaweed in its standard matrix form -- revealing its characteristic wavy seaweed ``shape.'' The asterisks represent admissible locations and entries in non-admissible locations are zeroes.
See Figure \ref{Aseaweed}.
\end{Ex}



\begin{figure}[H]
$$\begin{tikzpicture}[scale=.6]
\draw (0,0) -- (0,6);
\draw (0,6) -- (6,6);
\draw (6,6) -- (6,0);
\draw (6,0) -- (0,0);
\draw [line width=3](0,6) -- (0,4);
\draw [line width=3](0,4) -- (2,4);
\draw [line width=3](2,4) -- (2,0);
\draw [line width=3](2,0) -- (6,0);

\draw [line width=3](0,6) -- (1,6);
\draw [line width=3](1,6) -- (1,5);
\draw [line width=3](1,5) -- (3,5);
\draw [line width=3](3,5) -- (3,3);
\draw [line width=3](3,3) -- (6,3);
\draw [line width=3](6,3) -- (6,0);

\draw [dotted] (0,6) -- (6,0);

\node at (.5,5.4) {{\LARGE *}};
\node at (.5,4.4) {{\LARGE *}};
\node at (1.5,4.4) {{\LARGE *}};
\node at (2.5,4.4) {{\LARGE *}};
\node at (2.5,3.4) {{\LARGE *}};
\node at (2.5,2.4) {{\LARGE *}};
\node at (3.5,2.4) {{\LARGE *}};
\node at (4.5,2.4) {{\LARGE *}};
\node at (5.5,2.4) {{\LARGE *}};
\node at (2.5,1.4) {{\LARGE *}};
\node at (3.5,1.4) {{\LARGE *}};
\node at (4.5,1.4) {{\LARGE *}};
\node at (5.5,1.4) {{\LARGE *}};
\node at (2.5,0.4) {{\LARGE *}};
\node at (3.5,0.4) {{\LARGE *}};
\node at (4.5,0.4) {{\LARGE *}};
\node at (5.5,0.4) {{\LARGE *}};

\node[label=above:{1}] at (0.5,6) {};
\node[label=above:{2}] at (2,5) {};
\node[label=above:{3}] at (4.5,3) {};
\node[label=left:{2}] at (0,5) {};
\node[label=left:{4}] at (2,2) {};

\end{tikzpicture}$$

\caption{
The seaweed $\mathfrak{p}_6^A\frac{2|4}{1|2|3}$}
\label{Aseaweed}
\end{figure}

To each seaweed $\mathfrak{p}_n^A\;\frac{a_1|\cdots|a_m}{b_1|\cdots|b_t}$ we associate a planar graph called a \textit{{meander}}, constructed as follows.  First, place $n$ vertices $v_1$ through $v_n$ in a horizontal line.  Next, create two partitions of the vertices by forming \textit{{top}} and {\textit{bottom blocks}} of vertices of size $a_1$, $a_2$, $\cdots$, $a_m$, and $b_1$, $b_2$, $\cdots$, $b_t$, respectively.  Place edges in each top (likewise bottom) block in the same way. Add an edge from the first vertex of the block to the last vertex of the same block.  Repeat this edge addition on the second vertex and the second to last vertex within the same block and so on within each block of both partitions. Top edges are drawn concave down and bottom edges are drawn concave up. Let $M(\g)$ denote the meander associated with $\g$. We place a counterclockwise orientation on $M(\g)$ to produce the \textit{directed meander} $\overrightarrow{M}(\g).$ See Example~\ref{ex:meanders}.


\begin{Ex}\label{ex:meanders}  We illustrate the meander and directed meander of $\g=\mathfrak{p}_8^A\frac{2|6}{8}.$ 
See Figure 2.

\begin{figure}[H]
$$\begin{tikzpicture}[scale=0.8]
\def\Node{\node [circle, fill, inner sep=1.5pt]}
\tikzset{->-/.style={decoration={
  markings,
  mark=at position .55 with {\arrow{>}}},postaction={decorate}}}
  \Node[label=below:\footnotesize {$v_1$}] at (0,0) {};
  \Node[label=below:\footnotesize {$v_2$}] at (1,0) {};
  \Node[label=below:\footnotesize {$v_3$}] at (2,0) {};
  \Node[label=below:\footnotesize {$v_4$}] at (3,0) {};
  \Node[label=below:\footnotesize {$v_5$}] at (4,0) {};
  \Node[label=below:\footnotesize {$v_6$}] at (5,0){};
  \Node[label=below:\footnotesize{$v_7$}] at (6,0){};
  \Node[label=below:\footnotesize{$v_8$}] at (7,0){};
  \draw (0,0) to[bend left=60] (1,0);
  \draw (6,0) to[bend left=60] (1,0);
  \draw (5,0) to[bend left=60] (2,0);
  \draw (4,0) to[bend left=60] (3,0);
  \draw (2,0) to[bend left=60] (7,0);
  \draw (3,0) to[bend left=60] (6,0);
  \draw (4,0) to[bend left=60] (5,0);
  \draw (7,0) to[bend left=60] (0,0);
\end{tikzpicture}
\hspace{5em}
\begin{tikzpicture}[scale=0.8]
\def\Node{\node [circle, fill, inner sep=1.5pt]}
\tikzset{->-/.style={decoration={
  markings,
  mark=at position .55 with {\arrow{>}}},postaction={decorate}}}
  \Node[label=below:\footnotesize {$v_1$}] at (0,0) {};
  \Node[label=below:\footnotesize {$v_2$}] at (1,0) {};
  \Node[label=below:\footnotesize {$v_3$}] at (2,0) {};
  \Node[label=below:\footnotesize {$v_4$}] at (3,0) {};
  \Node[label=below:\footnotesize {$v_5$}] at (4,0) {};
  \Node[label=below:\footnotesize {$v_6$}] at (5,0){};
  \Node[label=below:\footnotesize{$v_7$}] at (6,0){};
  \Node[label=below:\footnotesize{$v_8$}] at (7,0){};
  \draw[->-] (1,0) to[bend right=60] (0,0);
  \draw[->-] (1,0) to[bend right=60] (6,0);
  \draw[->-] (2,0) to[bend right=60] (5,0);
  \draw[->-] (3,0) to[bend right=60] (4,0);
  \draw[->-] (7,0) to[bend right=60] (2,0);
  \draw[->-] (6,0) to[bend right=60] (3,0);
  \draw[->-] (5,0) to[bend right=60] (4,0);
  \draw[->-] (0,0) to[bend right=60] (7,0);
\end{tikzpicture}$$

\vspace{-1em}

\caption{$M(\g)$ and $\protect\overrightarrow{M}(\g)$}
\label{MeanderinSeaweed}
\end{figure}

\end{Ex}


\noindent A meander can be visualized inside its associated seaweed $\g$ if one views the diagonal locations $\{(i,i)\}_{i=1}^n$ of $\g$ as the $n$ vertices $\{v_i\}_{i=1}^n$ of the meander and reckons the top edges $\{(v_i,v_j) ~|~  i<j\}$  of the meander as the unions of line segments connecting the matrix locations $(i,i)\rightarrow(j,i)\rightarrow(j,j)$ and the bottom edges $\{(v_i,v_j) ~|~ i<j\}$ of the meander as the unions of line segments connecting the matrix locations $(i,i)\rightarrow(i,j)\rightarrow(j,j)$. See Figure \ref{fig:MeanderinSeaweed}.

\begin{figure}[H]
$$\begin{tikzpicture}[scale=0.45]
\def\Node{\node [circle, fill, inner sep=3pt]}
	\draw (0,0) -- (0,8);
	\draw (0,8) -- (8,8);
	\draw (8,8) -- (8,0);
	\draw (8,0) -- (0,0);
	\draw [line width=3](0,8) -- (0,6);
	\draw [line width=3](0,6) -- (2,6);
	\draw [line width=3](2,6) -- (2,0);
	\draw [line width=3](2,0) -- (8,0);
	\draw [line width=3](0,8) -- (8,8);
	\draw [line width=3](8,8) -- (8,0);
	\draw[dotted] (0,8)--(8,0);
	\node at (0.5,7.4) {{\LARGE *}};
	\node at (1.5,7.4) {{\LARGE *}};
	\node at (2.5,7.4) {{\LARGE *}};
	\node at (3.5,7.4) {{\LARGE *}};
	\node at (4.5,7.4) {{\LARGE *}};
	\node at (5.5,7.4) {{\LARGE *}};
	\node at (6.5,7.4) {{\LARGE *}};
	\node at (7.5,7.4) {{\LARGE *}};
	\node at (0.5,6.4) {{\LARGE *}};
	\node at (1.5,6.4) {{\LARGE *}};
	\node at (2.5,6.4) {{\LARGE *}};
	\node at (3.5,6.4) {{\LARGE *}};
	\node at (4.5,6.4) {{\LARGE *}};
	\node at (5.5,6.4) {{\LARGE *}};
	\node at (6.5,6.4) {{\LARGE *}};
	\node at (7.5,6.4) {{\LARGE *}};
	\node at (2.5,5.4) {{\LARGE *}};
	\node at (3.5,5.4) {{\LARGE *}};
	\node at (4.5,5.4) {{\LARGE *}};
	\node at (5.5,5.4) {{\LARGE *}};
	\node at (6.5,5.4) {{\LARGE *}};
	\node at (7.5,5.4) {{\LARGE *}};
	\node at (2.5,4.4) {{\LARGE *}};
	\node at (3.5,4.4) {{\LARGE *}};
	\node at (4.5,4.4) {{\LARGE *}};
	\node at (5.5,4.4) {{\LARGE *}};
	\node at (6.5,4.4) {{\LARGE *}};
	\node at (7.5,4.4) {{\LARGE *}};
	\node at (2.5,3.4) {{\LARGE *}};
	\node at (3.5,3.4) {{\LARGE *}};
	\node at (4.5,3.4) {{\LARGE *}};
	\node at (5.5,3.4) {{\LARGE *}};
	\node at (6.5,3.4) {{\LARGE *}};
	\node at (7.5,3.4) {{\LARGE *}};
	\node at (2.5,2.4) {{\LARGE *}};
	\node at (3.5,2.4) {{\LARGE *}};
	\node at (4.5,2.4) {{\LARGE *}};
	\node at (5.5,2.4) {{\LARGE *}};
	\node at (6.5,2.4) {{\LARGE *}};
	\node at (7.5,2.4) {{\LARGE *}};
	\node at (2.5,1.4) {{\LARGE *}};
	\node at (3.5,1.4) {{\LARGE *}};
	\node at (4.5,1.4) {{\LARGE *}};
	\node at (5.5,1.4) {{\LARGE *}};
	\node at (6.5,1.4) {{\LARGE *}};
	\node at (7.5,1.4) {{\LARGE *}};
	\node at (2.5,0.4) {{\LARGE *}};
	\node at (3.5,0.4) {{\LARGE *}};
	\node at (4.5,0.4) {{\LARGE *}};
	\node at (5.5,0.4) {{\LARGE *}};
	\node at (6.5,0.4) {{\LARGE *}};
	\node at (7.5,0.4) {{\LARGE *}};
	\node [label=above:{8}] at (4,8){};
	\node [label=left:{2}] at (0,7) {};
	\node [label=left:{6}] at (2,3) {};
	
	\draw[line width=1.5] (0.5,7.6)--(7.5,7.6)--(7.5,0.6)--(2.5,0.6)--(2.5,5.6)--(5.5,5.6)--(5.5,2.6)--(4.5,2.6)--(4.5,4.6)--(3.5,4.6)--(3.5,1.6)--(6.5,1.6)--(6.5,6.6)--(0.5,6.6)--(0.5,7.6);
\end{tikzpicture}$$
    \caption{The meander of $\mathfrak{g}=\mathfrak{p}_8^A\frac{2|6}{8}$ visualized in $\mathfrak{g}$}
    \label{fig:MeanderinSeaweed}
\end{figure}

Remarkably, the index of the seaweed can be computed by counting the number and type of the connected components in the associated meander. In a given meander, we call a connected component a \textit{path} if it is not a cycle.  Note that a path may be degenerate, i.e., consist of a single vertex.

\begingroup
\makeatletter
\apptocmd{\thetheorem}{  \unless\ifx\protect\@unexpandable@protect\textnormal{(Dergachev and A. Kirillov} \textbf{\cite{DK}}, \textnormal{2000)}\protect\footnote{The authors actually established the formula $2C+P$ for the index of a seaweed subalgebra of $\mathfrak{gl}(n)$, but only a minor algebraic argument is required to extend to the type-$A$ case yielding (\ref{eqn:indexA}). See \textbf{\cite{seriesA}}.}\fi}{}{}
\makeatother

\begin{theorem}\label{thm:indexA} 
If $\mathfrak{g}$ is a type-A seaweed, 
then
\begin{equation}\label{eqn:indexA}
\ind\mathfrak{g}=2C+P-1,
\end{equation}
\noindent 
 where $C$ is the number of cycles and $P$ is the number of paths in  $M(\mathfrak{g})$.
\end{theorem}
\endgroup


\begin{Ex}
Consider the type-A seaweed $\mathfrak{b}=\mathfrak{p}_5^A\frac{1|1|1|1|1}{5},$ which is the standard Borel subalgebra of $\mathfrak{sl}(5)$ \textup(see Figure~\ref{fig:borel} \textup(left\textup)\textup).  Since $M(\mathfrak{b})$ consists of zero cycles and three paths  \textup(see Figure~\ref{fig:borel} \textup(right\textup)\textup), it follows from Theorem~\ref{thm:indexA} that $$\ind\mathfrak{b}=2(0)+3-1=2.$$ 

\begin{figure}[H]
$$\begin{tikzpicture}[scale=.6]
\draw (0,0)--(5,0)--(5,5)--(0,5)--(0,0);

\draw[dotted] (5,0)--(0,5);

\draw[line width=3] (0,5)--(5,5);
\draw[line width=3] (5,5)--(5,0);

\draw[line width=3] (0,5)--(0,4);
\draw[line width=3] (0,4)--(1,4);
\draw[line width=3] (1,4)--(1,3);
\draw[line width=3] (1,3)--(2,3);
\draw[line width=3] (2,3)--(2,2);
\draw[line width=3] (2,2)--(3,2);
\draw[line width=3] (3,2)--(3,1);
\draw[line width=3] (3,1)--(4,1);
\draw[line width=3] (4,1)--(4,0);
\draw[line width=3] (4,0)--(5,0);

\node at (0.5,4.4){{\LARGE *}};
\node at (1.5,4.4){{\LARGE *}};
\node at (2.5,4.4){{\LARGE *}};
\node at (3.5,4.4){{\LARGE *}};
\node at (4.5,4.4){{\LARGE *}};
\node at (1.5,3.4){{\LARGE *}};
\node at (2.5,3.4){{\LARGE *}};
\node at (3.5,3.4){{\LARGE *}};
\node at (4.5,3.4){{\LARGE *}};
\node at (2.5,2.4){{\LARGE *}};
\node at (3.5,2.4){{\LARGE *}};
\node at (4.5,2.4){{\LARGE *}};
\node at (3.5,1.4){{\LARGE *}};
\node at (4.5,1.4){{\LARGE *}};
\node at (4.5,.4){{\LARGE *}};

\node[label=above:{5}] at (2.5,5){};
\node[label=left:{1}] at (0,4.5){};
\node[label=left:{1}] at (1,3.5){};
\node[label=left:{1}] at (2,2.5){};
\node[label=left:{1}] at (3,1.5){};
\node[label=left:{1}] at (4,.5){};
\end{tikzpicture}
\hspace{5em}
\begin{tikzpicture}
\def\Node{\node [circle, fill, inner sep=1.5pt]}

\Node[label=above:{$v_1$}] (1) at (0,0){};
\Node[label=above:{$v_2$}] (2) at (1,0){};
\Node[label=above:{$v_3$}] (3) at (2,0){};
\Node[label=above:{$v_4$}] (4) at (3,0){};
\Node[label=above:{$v_5$}] (5) at (4,0){};

\draw (1) to[bend right=60] (5);
\draw (2) to[bend right=60] (4);
\end{tikzpicture}$$
    \caption{The Borel $\mathfrak{b}$ and $M(\mathfrak{b}) $}
    \label{fig:borel}
\end{figure}
\end{Ex}

Any meander can be contracted, or ``wound down," to the empty meander through a sequence of graph-theoretic moves -- detailed in Lemma~\ref{lem:winding} below -- each of which is uniquely determined by the structure of the meander at the time of the move application. Such a sequence is called the \textit{signature} of the meander (see \textbf{ \cite{meanders2}}).  The signature is essentially a graph theoretic recasting of Panyushev’s reduction algorithm  (see \textbf{\cite{Pan}}).
\bigskip

\begin{tcolorbox}[breakable, enhanced]
\begin{lemma}[Coll et al. \textbf{\cite{meanders2}}]\label{lem:winding}
\label{winding down}
Let $\g=\mathfrak{p}_n^A\frac{a_1|\dots|a_m}{b_1|\dots|b_t}$ be a type-A seaweed with associated meander $M(\g)$ -- in this setting, we say the meander $M=M(\mathfrak{g})$ is of type $\frac{a_1|\dots|a_m}{b_1|\dots|b_t}.$ Create a meander $M'$ by one of the following moves.
\begin{enumerate}
    \item {Pure Contraction \textup(P\textup):} If $a_1>2b_1$, then $M\mapsto M'$ of type $\frac{(a_1-2b_1)|b_1|a_2|\cdots|a_m}{b_2|b_3|\cdots|b_t}$.
	\item {Block Elimination  \textup(B\textup):} If $a_1=2b_1$, then $M\mapsto M'$ of type $\frac{b_1|a_2|\cdots|a_m}{b_2|b_3|\cdots|b_t}$.
    \item {Rotation Contraction \textup(R\textup):} If $b_1<a_1<2b_1$, then $M\mapsto M'$ of type $\frac{b_1|a_2|\cdots|a_m}{(2b_1-a_1)|b_2|\cdots|b_t}$.
    \item {Component Deletion \textup(C\textup(c\textup)\textup):} If $a_1=b_1=c$, then $M\mapsto M'$ of type $\frac{a_2|\cdots|a_m}{b_2|\cdots|b_t}$.
    \item {Flip \textup(F\textup):} If $a_1<b_1$, then $M\mapsto M'$ of type $\frac{b_1|b_2|\cdots|b_t}{a_1|\cdots|a_m}$.
\end{enumerate}
  These moves are called \textit{{winding-down moves}}.  For all moves, except the Component Deletion move, $\g$ and $\g'$ \textup(the seaweed with meander $M(\g')=M'$\textup) have the same index. 
\end{lemma}
\end{tcolorbox}
\bigskip

\noindent
If $\mathfrak{g}$ is as in Lemma \ref{winding down} and $M(\mathfrak{g})$ is a meander for which the collection of component deletions in its signature is $\{C(c_1),\dots,C(c_q)\}$, then we say that $M(\mathfrak{g})$ and,  by an abuse of terminology,  $\mathfrak{g}$ each have \textit{homotopy type} $\mathcal{H}(c_1,\dots,c_q).$  See Example \ref{ex:winding}.

\begin{Ex}\label{ex:winding}
The meander of type $\frac{2|6}{8}$ of Example~\ref{ex:meanders} has signature $FPFRBC(2),$ and so has homotopy type $\mathcal{H}(2).$ The winding-down moves associated with the signature are illustrated in Figure~\ref{fig:winding}.

\begin{figure}[H]
$$\begin{tikzpicture}
[scale=.34]
\def\Node{\node [circle, fill, inner sep=1.5pt]}
\node at (3.5,-2.75){$\frac{2|6}{8}$};
\Node(1) at (0,0){};
\Node(2) at (1,0){};
\Node(3) at (2,0){};
\Node(4) at (3,0){};
\Node(5) at (4,0){};
\Node(6) at (5,0){};
\Node(7) at (6,0){};
\Node(8) at (7,0){};
\draw (1) to[bend right=60](8);
\draw (1) to[bend left=60](2);
\draw (2) to[bend right=60](7);
\draw (3) to[bend right=60](6);
\draw (4) to[bend right=60](5);
\draw (3) to[bend left=60](8);
\draw (4) to[bend left=60](7);
\draw (5) to[bend left=60](6);
\end{tikzpicture}
\hspace{0.5em}
\begin{tikzpicture}[scale=.34]
\def\Node{\node [circle, fill, inner sep=1.5pt]}
\node at (-1.5,0){$\overset{F}{\mapsto}$};
\node at (3.5,-2.75){$\frac{8}{2|6}$};
\Node(1) at (0,0){};
\Node(2) at (1,0){};
\Node(3) at (2,0){};
\Node(4) at (3,0){};
\Node(5) at (4,0){};
\Node(6) at (5,0){};
\Node(7) at (6,0){};
\Node(8) at (7,0){};
\draw (1) to[bend left=60](8);
\draw (1) to[bend right=60](2);
\draw (2) to[bend left=60](7);
\draw (3) to[bend left=60](6);
\draw (4) to[bend left=60](5);
\draw (3) to[bend right=60](8);
\draw (4) to[bend right=60](7);
\draw (5) to[bend right=60](6);
\end{tikzpicture}
\hspace{0.5em}
\begin{tikzpicture}[scale =.34]
\def\Node{\node [circle, fill, inner sep=1.5pt]}
\node at (-1.5,0){$\overset{P}{\mapsto}$};
\node at (2.5,-2.75){$\frac{4|2}{6}$};
\Node at (0,0){};
\Node at (1,0){};
\Node at (2,0){};
\Node at (3,0){};
\Node at (4,0){};
\Node at (5,0){};
\draw (0,0) to[bend left=60] (3,0);
\draw (1,0) to[bend left=60] (2,0);
\draw (4,0) to[bend left=60] (5,0);
\draw (0,0) to[bend right=60] (5,0);
\draw (1,0) to[bend right=60] (4,0);
\draw (2,0) to[bend right=60] (3,0);
\end{tikzpicture}
\hspace{0.5em}
\begin{tikzpicture}[scale =.34]
\def\Node{\node [circle, fill, inner sep=1.5pt]}
\node at (-1.5,0){$\overset{F}{\mapsto}$};
\node at (2.5,-2.75){$\frac{6}{4|2}$};
\Node at (0,0){};
\Node at (1,0){};
\Node at (2,0){};
\Node at (3,0){};
\Node at (4,0){};
\Node at (5,0){};
\draw (0,0) to[bend right=60] (3,0);
\draw (1,0) to[bend right=60] (2,0);
\draw (4,0) to[bend right=60] (5,0);
\draw (0,0) to[bend left=60] (5,0);
\draw (1,0) to[bend left=60] (4,0);
\draw (2,0) to[bend left=60] (3,0);
\end{tikzpicture}
\hspace{0.5em}
\begin{tikzpicture}[scale=.34]
\def\Node{\node [circle, fill, inner sep=1.5pt]}
\node at (-1.5,0){$\overset{R}{\mapsto}$};
\node at (1.5,-2.75){$\frac{4}{2|2}$};
\Node at (0,0){};
\Node at (1,0){};
\Node at (2,0){};
\Node at (3,0){};
\draw (0,0) to[bend left=60] (3,0);
\draw (1,0) to[bend left=60] (2,0);
\draw (0,0) to[bend right=60] (1,0);
\draw (2,0) to[bend right=60] (3,0);
\end{tikzpicture}
\hspace{0.5em}
\begin{tikzpicture}[scale=.34]
\def\Node{\node [circle, fill, inner sep=1.5pt]}
\node at (-1.5,0){$\overset{B}{\mapsto}$};
\node at (0.5,-2.75){$\frac{2}{2}$};
\Node at (0,0){};
\Node at (1,0){};
\draw (0,0) to[bend left=90] (1,0);
\draw (0,0) to[bend right=90] (1,0);
\end{tikzpicture}
\hspace{0.5em}
\begin{tikzpicture}[scale=.34]
\node at (-1.5,-1){$\overset{C(2)}{\mapsto}$};
\node at (0,-1.2){$\emptyset$};
\node at (2,-3.75){\color{white}$\frac{0}{0}$};
\end{tikzpicture}$$
    \caption{Winding down the meander of type $\frac{2|6}{8}$}
    \label{fig:winding}
\end{figure}
\end{Ex}

Since contact seaweeds are the main focus, we need only consider those seaweeds with index one. Theorem $\ref{thm:indexA}$ tells us how to find them. For emphasis, we record this in the following corollary to Theorem~\ref{thm:indexA}.

\begin{theorem}\label{cor:index A}
A type-A seaweed has index one if and only if its associated meander consists of exactly one cycle or exactly two paths, i.e., its associated meander has homotopy type $\mathcal{H}(2)$ or $\mathcal{H}(1,1).$
\end{theorem}

\section{Framework for regular forms}\label{sec:framework}

In her dissertation, Dougherty (\textbf{\cite{Adiss}}, 2019) establishes a complicated inductive framework for the explicit construction of families of regular one-forms on seaweeds of classical type (cf. \textbf{\cite{aria}}). When 
restricted to type-$A$ seaweeds of index one, this framework simplifies considerably. Importantly, some of these regular families consist of one-forms that are also contact.
We detail this ``type-$A$ framework'' in the following subsections.

\subsection{Component meanders}
\textit{Notation}:  Ongoing, $\mathfrak{g}$ will be assumed to be a type-$A$ seaweed.

\medskip
If $\mathfrak{g}$ has homotopy type $\mathcal{H}(c_1,\dots,c_q),$ define the
\textit{component meander} $CM(\g)$ associated with $\g$ to be the meander with the same signature as $\g$ except that the component deletions are all of size one. The construction of $CM(\mathfrak{g})$ involves an implicit identification of components, i.e., vertices and edges, in $M(\mathfrak{g})$. The vertices of $CM(\mathfrak{g})$ are $\{v_{I_1},\dots,v_{I_t}\},$ where $I_i$ is the set of indices for the vertices in $M(\mathfrak{g})$ that were collapsed into one vertex in $CM(\mathfrak{g}).$ Note that $|I_i|=c_j$ if $v_{I_i}$ is a vertex of $CM(\mathfrak{g})$ arising from the collapsing of a component of size $c_j.$ Also note $CM(\mathfrak{g})$ may be oriented as usual to yield $\overrightarrow{CM}(\mathfrak{g})$.  See Example \ref{ex:componentmeander}.

\begin{Ex}\label{ex:componentmeander}
Consider $\mathfrak{g}=\mathfrak{p}_6^A\frac{2|1|1|2}{6}.$
In Figure \ref{fig:commeanex}, we illustrate the meander, component meander, and directed component meander of $\mathfrak{g}$.


\begin{figure}[H]
$$\begin{tikzpicture}[scale=0.8]
\def\Node{\node [circle, fill, inner sep=1.5pt]}
\tikzset{->-/.style={decoration={
  markings,
  mark=at position .55 with {\arrow{>}}},postaction={decorate}}}
\tikzset{-->-/.style={decoration={
  markings,
  mark=at position .6 with {\arrow{>}}},postaction={decorate}}}
\Node (1) at (0,0){};
\Node (2) at (1,0){};
\Node (3) at (2,0){};
\Node (4) at (3,0){};
\Node (5) at (4,0){};
\Node (6) at (5,0){};
\draw (1) to[bend right=60] (6);
\draw (2) to[bend right=60] (5);
\draw (3) to[bend right=60] (4);
\draw (1) to[bend left=60] (2);
\draw (5) to[bend left=60] (6);

\node[label=above:{$v_1$}] at (0,0){};
\node[label=above:{$v_2$}] at (1,0){};
\node[label=above:{$v_3$}] at (2,0){};
\node[label=above:{$v_4$}] at (3,0){};
\node[label=above:{$v_5$}] at (4,0){};
\node[label=above:{$v_6$}] at (5,0){};

\draw[-stealth] (5.5,0)--(7,0);

\Node (7) at (7.5,0){};
\Node (8) at (8.5,0){};
\Node (9) at (9.5,0){};
\Node (10) at (10.5,0){};
\draw[line width=1.8] (7) to[bend right=60] (10);
\draw[line width=1.8] (8) to[bend right=60] (9);

\node[label=above:{$v_{\{1,2\}}$}] at (7.5,0){};
\node[label=above:{$v_{\{3\}}$}] at (8.5,0){};
\node[label=above:{$v_{\{4\}}$}] at (9.5,0){};
\node[label=above:{$v_{\{5,6\}}$}] at (10.5,0){};

\draw[-stealth] (11,0)--(12.5,0);

\Node (11) at (13,0){};
\Node (12) at (14,0){};
\Node (13) at (15,0){};
\Node (14) at (16,0){};
\draw[line width=1.8,->-] (11) to[bend right=60] (14);
\draw[line width=1.8,-->-] (12) to[bend right=60] (13);

\node[label=above:{$v_{\{1,2\}}$}] at (13,0){};
\node[label=above:{$v_{\{3\}}$}] at (14,0){};
\node[label=above:{$v_{\{4\}}$}] at (15,0){};
\node[label=above:{$v_{\{5,6\}}$}] at (16,0){};
\end{tikzpicture}$$
\caption{$M(\mathfrak{g})$, $CM(\mathfrak{g})$, and $\protect\overrightarrow{CM}(\mathfrak{g})$}
\label{fig:commeanex}
\end{figure}

\end{Ex}

\subsection{The Core and Peak}

The core and peak  of $\mathfrak{g}$ are sets of sets of admissible locations and their definitions are based on a vector space decomposition of $\mathfrak{g}$ developed from the homotopy type of $\mathfrak{g}$ as follows. If  $\mathfrak{g}$ has homotopy type $\mathcal{H}(c_1,\dots,c_q)$ then 
\begin{equation}\label{eqn:decomp}
\mathfrak{g}=\bigoplus_{i=1}^q\mathfrak{g}|_{c_i},
\end{equation} where $\mathfrak{g}|_{c_i}$ is the subspace of $\mathfrak{g}$ corresponding to a particular component of size $c_i$ in $M(\mathfrak{g}).$ See Example~\ref{ex:decomp}.

\begin{Ex}\label{ex:decomp}
Consider $\mathfrak{g}=\mathfrak{p}_6^A\frac{2|1|1|2}{6}$ of our running Example \ref{ex:componentmeander}. Note that $\mathfrak{g}$ has homotopy type $\mathcal{H}(2,1)$, which yields the vector space decomposition $\mathfrak{g}=\mathfrak{g}|_2\oplus\mathfrak{g}|_1.$ See Figure~\ref{fig:decomp}.

\begin{figure}[H]
$$\begin{tikzpicture}[scale=.6]
\draw (0,0) -- (0,6);
\draw (0,6) -- (6,6);
\draw (6,6) -- (6,0);
\draw (6,0) -- (0,0);
\draw [line width=3](0,6) -- (0,4);
\draw [line width=3](0,4) -- (2,4);
\draw [line width=3](2,4) -- (2,3);
\draw [line width=3](2,3) -- (3,3);
\draw [line width=3](3,3) -- (3,2);
\draw [line width=3](3,2) -- (4,2);
\draw [line width=3](4,2) -- (4,0);
\draw [line width=3](4,0) -- (6,0);

\draw [line width=3](0,6) -- (6,6);
\draw [line width=3](6,6) -- (6,0);

\draw [dotted] (0,6) -- (6,0);

\node at (.5,5.4) {{\LARGE *}};
\node at (1.5,5.4) {{\LARGE *}};
\node at (2.5,5.4) {{\LARGE *}};
\node at (3.5,5.4) {{\LARGE *}};
\node at (4.5,5.4) {{\LARGE *}};
\node at (5.5,5.4) {{\LARGE *}};
\node at (.5,4.4) {{\LARGE *}};
\node at (1.5,4.4) {{\LARGE *}};
\node at (2.5,4.4) {{\LARGE *}};
\node at (3.5,4.4) {{\LARGE *}};
\node at (4.5,4.4) {{\LARGE *}};
\node at (5.5,4.4) {{\LARGE *}};
\node at (2.5,3.4) {{\LARGE *}};
\node at (3.5,3.4) {{\LARGE *}};
\node at (4.5,3.4) {{\LARGE *}};
\node at (5.5,3.4) {{\LARGE *}};
\node at (3.5,2.4) {{\LARGE *}};
\node at (4.5,2.4) {{\LARGE *}};
\node at (5.5,2.4) {{\LARGE *}};
\node at (4.5,1.4) {{\LARGE *}};
\node at (5.5,1.4) {{\LARGE *}};
\node at (4.5,0.4) {{\LARGE *}};
\node at (5.5,0.4) {{\LARGE *}};

\node[label=above:{6}] at (3,6) {};
\node[label=left:{2}] at (0,5) {};
\node[label=left:{1}] at (2,3.5) {};
\node[label=left:{1}] at (3,2.5) {};
\node[label=left:{2}] at (4,1) {};

\draw[line width=1.5] (0.5,5.55)--(5.5,5.55)--(5.5,0.55)--(4.5,0.55)--(4.5,4.55)--(0.5,4.55)--(0.5,5.55);
\draw[line width=1.5] (2.5,3.55)--(3.5,3.55)--(3.5,2.55);

\node at (3,-1){$\mathfrak{g}$};

\node at (7,3){\Large$=$};
\end{tikzpicture}
\begin{tikzpicture}[scale=.6]
\draw (0,0) -- (0,6);
\draw (0,6) -- (6,6);
\draw (6,6) -- (6,0);
\draw (6,0) -- (0,0);
\draw [line width=3](0,6) -- (0,4);
\draw [line width=3](0,4) -- (2,4);
\draw [line width=3](2,4) -- (2,3);
\draw [line width=3](2,3) -- (3,3);
\draw [line width=3](3,3) -- (3,2);
\draw [line width=3](3,2) -- (4,2);
\draw [line width=3](4,2) -- (4,0);
\draw [line width=3](4,0) -- (6,0);

\draw [line width=3](0,6) -- (6,6);
\draw [line width=3](6,6) -- (6,0);

\draw [dotted] (0,6) -- (6,0);

\node at (.5,5.4) {{\LARGE *}};
\node at (1.5,5.4) {{\LARGE *}};
\node at (2.5,5.4) {{\LARGE *}};
\node at (3.5,5.4) {{\LARGE *}};
\node at (4.5,5.4) {{\LARGE *}};
\node at (5.5,5.4) {{\LARGE *}};
\node at (.5,4.4) {{\LARGE *}};
\node at (1.5,4.4) {{\LARGE *}};
\node at (2.5,4.4) {{\LARGE *}};
\node at (3.5,4.4) {{\LARGE *}};
\node at (4.5,4.4) {{\LARGE *}};
\node at (5.5,4.4) {{\LARGE *}};
\node at (4.5,3.4) {{\LARGE *}};
\node at (5.5,3.4) {{\LARGE *}};
\node at (4.5,2.4) {{\LARGE *}};
\node at (5.5,2.4) {{\LARGE *}};
\node at (4.5,1.4) {{\LARGE *}};
\node at (5.5,1.4) {{\LARGE *}};
\node at (4.5,0.4) {{\LARGE *}};
\node at (5.5,0.4) {{\LARGE *}};

\draw[line width=1.5] (0.5,5.55)--(5.5,5.55)--(5.5,0.55)--(4.5,0.55)--(4.5,4.55)--(0.5,4.55)--(0.5,5.55);



\node at (3,-1){$\mathfrak{g}|_2$};

\node at (7,3){$\bigoplus$};
\end{tikzpicture}
\begin{tikzpicture}[scale=.6]
\draw (0,0) -- (0,6);
\draw (0,6) -- (6,6);
\draw (6,6) -- (6,0);
\draw (6,0) -- (0,0);
\draw [line width=3](0,6) -- (0,4);
\draw [line width=3](0,4) -- (2,4);
\draw [line width=3](2,4) -- (2,3);
\draw [line width=3](2,3) -- (3,3);
\draw [line width=3](3,3) -- (3,2);
\draw [line width=3](3,2) -- (4,2);
\draw [line width=3](4,2) -- (4,0);
\draw [line width=3](4,0) -- (6,0);

\draw [line width=3](0,6) -- (6,6);
\draw [line width=3](6,6) -- (6,0);

\draw [dotted] (0,6) -- (6,0);

\draw[line width=1.5] (2.5,3.55)--(3.5,3.55)--(3.5,2.55);

\node at (2.5,3.4) {{\LARGE *}};
\node at (3.5,3.4) {{\LARGE *}};
\node at (3.5,2.4) {{\LARGE *}};



\node at (3,-1){$\mathfrak{g}|_1$};
\end{tikzpicture}$$
    \caption{Vector space decomposition of $\mathfrak{g}$}
    \label{fig:decomp}
\end{figure}
\end{Ex}


We are now ready to formally define the core and peak of $\mathfrak{g}$.  First, fix a component $\g|_{c_i}$ of homotopy type $c_i$ and define the sets

$$
V_{c_i}=\{I\;|\;v_I\text{ is a vertex on the path of $CM(\g)$ corresponding to }c_i\},$$

$$\textgoth{C}_{c_i}=\{I\times I\;|\;I\in V_{c_i}\}.$$

$$\textgoth{P}_{c_i}=\{I\times J\;|\;I,J\in V_{c_i}\text{ and } (v_I,v_J) \text{ is an edge in }\overrightarrow{CM}(\mathfrak{g})\}.$$
The set $\textgoth{C}_{c_i}$ is the \textit{core set of $\g|_{c_i}$} -- the set of $c_i\times c_i$ blocks on the diagonal of $\g$ contained in $\g|_{c_i}$ -- and the set $\textgoth{P}_{c_i}$ is the \textit{peak set of $\mathfrak{g}|_{c_i}.$}  Now, we define the \textit{core of $\g$} and the \textit{peak of $\g$} as the union of the core sets and peak sets, respectively, of the components in the vector space decomposition (\ref{eqn:decomp}). In other words,

$$\textgoth{C}_\g=\bigcup_{i=1}^q\textgoth{C}_{c_i}\hspace{2em}\text{ and }\hspace{2em}\textgoth{P}_\g=\bigcup_{i=1}^q\textgoth{P}_{c_i}.$$

\noindent 
Since $\textgoth{C}_{\mathfrak{g}}$ and $\textgoth{P}_{\mathfrak{g}}$ consist of sets of ordered pairs of indices defining blocks of admissible locations in $\mathfrak{g},$ we refer to elements of $\textgoth{C}_{\g}$ and $\textgoth{P}_{\g}$ as \textit{core blocks} and \textit{peak blocks}, respectively.

\begin{Ex}
We illustrate the definitions above by constructing the core and peak sets of the seaweed of Example~\ref{ex:componentmeander}, $\g=\mathfrak{p}_6^A\frac{2|1|1|2}{6}.$ Recall $\mathfrak{g}$ has homotopy type $\mathcal{H}(2,1).$ As illustrated in Figure~\ref{fig:commeanex} \textup(center\textup) above, the vertices of $CM(\g)$ are written as $v_{\{1,2\}},v_{\{3\}},v_{\{4\}},$ and $v_{\{5,6\}}.$ So, we have that $$V_2=\Big\{\{1,2\},\{5,6\}\Big\}\quad\quad\text{and}\quad\quad V_1=\Big\{\{3\},\{4\}\Big\},$$
so the core set of $\mathfrak{g}$ is $$\textgoth{C}_{\g}=\Big\{\{(1,1),(1,2),(2,1),(2,2)\},\{(3,3)\},\{(4,4)\},\{(5,5),(5,6),(6,5),(6,6)\}\Big\}.$$

\noindent Now, to construct the peak set of $\g,$ consider $\overrightarrow{CM}(\mathfrak{g}).$ As illustrated by Figure~\ref{fig:commeanex} \textup(right\textup), we have that $$\textgoth{P}_2=\Big\{\{(1,5),(1,6),(2,5),(2,6)\}\Big\}\quad\quad\text{and}\quad\quad\textgoth{P}_1=\Big\{\{(3,4)\}\Big\},$$ so the peak set of $\mathfrak{g}$ is $$\textgoth{P}_{\mathfrak{g}}=\Big\{\{(1,5),(1,6),(2,5),(2,6)\},\{(3,4)\}\Big\}.$$

\noindent 
The core blocks and peak blocks of $\g$ are bolded and outlined, respectively, in the seaweed in Figure~\ref{fig:cp}.

\begin{figure}[H]
$$\begin{tikzpicture}[scale=.6]
\draw (0,0) -- (0,6);
\draw (0,6) -- (6,6);
\draw (6,6) -- (6,0);
\draw (6,0) -- (0,0);
\draw [line width=3](0,6) -- (0,4);
\draw [line width=3](0,4) -- (2,4);
\draw [line width=3](2,4) -- (2,3);
\draw [line width=3](2,3) -- (3,3);
\draw [line width=3](3,3) -- (3,2);
\draw [line width=3](3,2) -- (4,2);
\draw [line width=3](4,2) -- (4,0);
\draw [line width=3](4,0) -- (6,0);

\draw [line width=3](0,6) -- (6,6);
\draw [line width=3](6,6) -- (6,0);

\draw [dotted] (0,6) -- (6,0);

\node at (.5,5.4) {{\LARGE \bf*}};
\node at (1.5,5.4) {{\LARGE \bf*}};
\node at (2.5,5.4) {{\LARGE *}};
\node at (3.5,5.4) {{\LARGE *}};
\node at (4.5,5.4) {{\LARGE *}};
\node at (5.5,5.4) {{\LARGE *}};
\node at (.5,4.4) {{\LARGE \bf*}};
\node at (1.5,4.4) {{\LARGE \bf*}};
\node at (2.5,4.4) {{\LARGE *}};
\node at (3.5,4.4) {{\LARGE *}};
\node at (4.5,4.4) {{\LARGE *}};
\node at (5.5,4.4) {{\LARGE *}};
\node at (2.5,3.4) {{\LARGE \bf*}};
\node at (3.5,3.4) {{\LARGE *}};
\node at (4.5,3.4) {{\LARGE *}};
\node at (5.5,3.4) {{\LARGE *}};
\node at (3.5,2.4) {{\LARGE \bf*}};
\node at (4.5,2.4) {{\LARGE *}};
\node at (5.5,2.4) {{\LARGE *}};
\node at (4.5,1.4) {{\LARGE \bf*}};
\node at (5.5,1.4) {{\LARGE \bf*}};
\node at (4.5,0.4) {{\LARGE \bf*}};
\node at (5.5,0.4) {{\LARGE \bf*}};

\node[label=above:{6}] at (3,6) {};
\node[label=left:{2}] at (0,5) {};
\node[label=left:{1}] at (2,3.5) {};
\node[label=left:{1}] at (3,2.5) {};
\node[label=left:{2}] at (4,1) {};

\draw (4.1,6)--(4.1,4.1)--(6,4.1);
\draw (3.1,3.1)--(3.1,3.9)--(3.9,3.9)--(3.9,3.1)--(3.1,3.1);

\end{tikzpicture}$$
\caption{The core and peak blocks identified within $\g$}
\label{fig:cp}
\end{figure}
\end{Ex}

\subsection{Regular one-forms}

The core and peak of an index-one $\mathfrak{g}$ facilitate the definition of a family $\Phi$ of regular one-forms on $\g$ reliant on the homotopy type of $\mathfrak{g}$. 
An element of $\Phi$ is of the form $\sum e_{i,j}^*$, where $e_{i,j}^*$ denotes the dual of the matrix unit $e_{i,j}$ and the sum is over a restricted set of admissible locations $(i,j)$ of $\mathfrak{g}$.  The restrictions are determined by the homotopy type of $\mathfrak{g}$, which, for an index-one $\g$, must be either $\mathcal{H}(2)$ or $\mathcal{H}(1,1)$.



\subsection{Homotopy type $\mathcal{H}(2)$}\label{sec:h2}

Let $\g$ have homotopy type $\mathcal{H}(2)$, with attendant peak and core.  Here, we construct a regular one-form $\varphi_{(2)}\in\Phi$ 
using $\textgoth{P}_{\mathfrak{g}}$ and $\textgoth{C}_{\mathfrak{g}}$. We find it convenient to mark the admissible locations which define $\varphi_{(2)}$ by black dots -- a black dot is placed in location $(i,j)$ if and only if $e_{i,j}^*$ is a nonzero summand of $\varphi_{(2)}$ -- according to the following schema. Note that the core and peak blocks are all $2\times 2$. A single black dot is placed in the upper left corner of each of the core blocks.  For the peak blocks, black dots are placed along the diagonal.
See Example~\ref{ex:Afunc}.

\begin{Ex}\label{ex:Afunc}
Recall that $\mathfrak{g}=\mathfrak{p}_8^A \frac{2|6}{8}$ has homotopy type $\mathcal{H}(2).$ The regular functional $\varphi_{(2)}$ has summands determined by locations of black dots in Figure~\ref{fig:Adots}.

\begin{figure}[H]
$$\begin{tikzpicture}[scale=0.45]
\def\Node{\node [circle, fill, inner sep=2pt]}
	\draw (0,0) -- (0,8);
	\draw (0,8) -- (8,8);
	\draw (8,8) -- (8,0);
	\draw (8,0) -- (0,0);
	\draw [line width=3](0,8) -- (0,6);
	\draw [line width=3](0,6) -- (2,6);
	\draw [line width=3](2,6) -- (2,0);
	\draw [line width=3](2,0) -- (8,0);
	\draw [line width=3](0,8) -- (8,8);
	\draw [line width=3](8,8) -- (8,0);
	\Node at (0.5,7.4) {};
	\Node at (6.5,7.4) {};
	\Node at (7.5,6.4) {};
	\Node at (2.5,5.4) {};
	\Node at (4.5,3.4) {};
	\Node at (3.5,0.4) {};
	\Node at (2.5,1.4) {};
	\Node at (6.5,1.4) {};
	\Node at (4.5,5.4) {};
	\Node at (5.5,4.4) {};
	\node [label=above:{8}] at (4,8){};
	\node [label=left:{2}] at (0,7) {};
	\node [label=left:{6}] at (2,3) {};
	\draw (6,8)--(6,6)--(8,6);
	\draw (6,6)--(4,6)--(4,4)--(6,4)--(6,6);
	\draw (2,2)--(4,2)--(4,0);
\end{tikzpicture}$$
\caption{The summands of $\varphi_{(2)}$ identified within $\g$}
\label{fig:Adots}
\end{figure}
\noindent
As Figure \ref{fig:Adots} displays, $\varphi_{(2)}$ is given by $$\varphi_{(2)}=e_{1,1}^*+e_{3,3}^*+e_{5,5}^*+e_{7,7}^*+e_{1,7}^*+e_{2,8}^*+e_{3,5}^*+e_{4,6}^*+e_{7,3}^*+e_{8,4}^*.$$
\end{Ex}

\subsubsection{The kernel of $\varphi_{(2)}$ }
In the proof of the main theorem in Section~\ref{sec:main}, we require a specific description of $\ker(B_{\varphi_{(2)}})=span\{k\}.$ To identify $k,$ we first require the following technical lemmas.

\begin{lemma}\label{lem:evenpart}
If $\g = \mathfrak{p}_n^A \frac{a_1|\dots|a_m}{b_1|\dots|b_t}$ has homotopy type $\mathcal{H}(2)$, then $a_i$ and $b_j$ are even for all $1\leq i\leq m$ and $1\leq j\leq t$.
\end{lemma}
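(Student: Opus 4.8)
The plan is to argue entirely in terms of vertex degrees in the meander $M(\g)$. The starting observation is that each vertex $v_i$ receives at most one top edge and at most one bottom edge, so every vertex of $M(\g)$ has degree at most two; this is precisely why each connected component is either a path or a cycle, as is already implicit in Theorem~\ref{index A}. The key refinement I would need is a description of exactly which vertices carry a top edge. Within a top block of size $a_k$ occupying local positions $1,\dots,a_k$, the construction pairs position $i$ with position $a_k+1-i$. Hence every position in the block is matched precisely when $a_k$ is even, whereas when $a_k$ is odd the central position $i=(a_k+1)/2$ is ``paired with itself'' and therefore receives no top edge; the case $a_k=1$ is the degenerate instance of this. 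The identical statement holds verbatim for a bottom block of size $b_j$.

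Next I would invoke the hypothesis. To say that $M(\g)$ consists of exactly one cycle means, in the language of Theorem~\ref{index A}, that $C=1$ and $P=0$. Since $M(\g)$ is a graph on all $n$ vertices $v_1,\dots,v_n$ with no path components, the single cycle must pass through every vertex. Consequently each vertex has degree exactly two, that is, each vertex carries both a top edge and a bottom edge.

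Combining the two steps then finishes the argument. Because every vertex carries a top edge, no top block can contain an unmatched central vertex, and by the dichotomy above this forces each $a_k$ to be even. Applying the identical reasoning to bottom edges forces each $b_j$ to be even, which is exactly the assertion of the lemma.

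The only place demanding genuine care is the bookkeeping of the nested pairing inside a block of odd size: one must confirm that such a block contributes exactly one vertex with no top edge (with a size-one block correctly captured as the trivial odd case), and one must also note the logical point that ``exactly one cycle'' rules out path components and hence forces the cycle to be spanning. Once these two points are pinned down the degree count is immediate, and no further computation is required.
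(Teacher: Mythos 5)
Your proposal is correct and follows essentially the same route as the paper: both arguments observe that a single cycle with no path components (degenerate paths included) forces every vertex to have degree two, hence one top and one bottom edge, while an odd block of size $a_i$ leaves its central vertex $v_{\sum_{k=1}^{i-1}a_k+\lceil a_i/2\rceil}$ with no top edge (and symmetrically for $b_j$). Your explicit remarks about the self-paired middle position and the cycle being spanning are just careful spellings-out of steps the paper leaves implicit.
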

\begin{proof}
Since $\mathfrak{g}$ has homotopy type $\mathcal{H}(2),$ its meander $M(\mathfrak{g})$ consists of exactly one cycle; moreover, each vertex has degree 2. In particular, each vertex is incident with exactly one top edge and exactly one bottom edge. However, if $a_i$ (resp. $b_j$) is odd for some $i$ (resp. $j$), then the middle vertex of block $a_i$ (resp. $b_j$), i.e., $v_{\sum_{k=1}^{i-1}a_k+\big\lceil \frac{a_i}{2}\big\rceil}$ \Big(resp. $v_{\sum_{k=1}^{j-1}b_k+ \big\lceil\frac{b_j}{2}\big\rceil}$\Big) is not incident to a top (resp. bottom) edge. The result follows.
\end{proof}

\begin{lemma}\label{lem:oppsign}
Let $\mathfrak{g}$ have homotopy type $\mathcal{H}(2).$ If $e^*_{i,j}$ occurs as a nontrivial summand in $\varphi_{(2)}$, then $i$ and $j$ have the same parity.
\end{lemma}
\begin{proof}
Let $\mathfrak{g}=\mathfrak{p}_n^A\frac{a_1|\dots|a_m}{b_1|\dots|b_t}$ have homotopy type $\mathcal{H}(2),$ and without loss of generality, assume there exists $s$ such that $a_s>2.$ Consider the collection of peak blocks determined by $a_s;$ in particular, consider the summands of $\varphi_{(2)}$ within each of these peak blocks. Such summands are
$e_{i,j}^*$ and $e_{i+1,j+1}^*,$ where $$i=\sum_{r=1}^{s}a_{r}-1+2t\text{\ \ \ \ \  and\ \ \ \ \ } j=\sum_{r=1}^{s-1}a_{r}+1+2t$$
for $0\le t\le \lfloor\frac{a_s}{4}\rfloor$. Notice that $i$ and $j$ are both odd for all $t$, by Lemma~\ref{lem:evenpart}. Since $a_s$ was arbitrary and the same argument applies for all $b_s,$ the result follows.
\end{proof}

\noindent
With Lemmas~\ref{lem:evenpart} and \ref{lem:oppsign} established, the following result identifies the generator of $\ker(B_{\varphi_{(2)}}).$

\begin{theorem}\label{thm:h2a}
Let $\mathfrak{g}\subset\mathfrak{sl}(n)$ be a type-A seaweed with homotopy type $\mathcal{H}(2)$. If $\varphi_{(2)}$ is defined as above, then $$\ker(B_{\varphi_{(2)}})=\textup{span}\{k\},$$ where $$k=\sum_{i=1}^n(-1)^{i+1}e_{i,i}.$$
\end{theorem}
\begin{proof}
Since $\varphi_{(2)}$ is regular on $\mathfrak{g},$ we need only show that $k\in\ker(B_{\varphi_{(2)}}).$ As a consequence of Lemma~\ref{lem:evenpart}, we have that $n$ is even, so $tr(k)=0$ and $k\in\mathfrak{g}.$ Now, to see that $k\in\ker(B_{\varphi_{(2)}}),$ consider the following:
\begin{itemize}
    \item[\textbf{(a)}] $\varphi_{(2)}([k,e_{i,i}-e_{i+1,i+1}])=0,$ for all $1\leq i\leq n-1,$
    \item[\textbf{(b)}] $\varphi_{(2)}([k,e_{i,j}])=0,$ for all $e_{i,j}\in\mathfrak{g}$ such that $e_{i,j}^*$ is not a summand of $\varphi_{(2)},$ and
    \item[\textbf{(c)}] $\varphi_{(2)}([k,e_{i,j}])=\varphi_{(2)}(e_{i,j}-e_{i,j})=0,$ for all $e_{i,j}\in\mathfrak{g}$ such that $e_{i,j}^*$ is a summand of $\varphi_{(2)}.$
\end{itemize}
Equations \textbf{(a)} and \textbf{(b)} follow immediately from $k$ being a diagonal element of $\mathfrak{g},$ and Equation \textbf{(c)} follows from Lemma~\ref{lem:oppsign}. Therefore, $k\in\ker(B_{\varphi_{(2)}}).$
\end{proof}

\begin{Ex}
Returning to the seaweed $\mathfrak{g}=\mathfrak{p}_8^A\frac{2|6}{8}$ from Example~\ref{ex:Afunc}, we have that $$\ker(B_{\varphi_{(2)}})=\textup{span}\{e_{1,1}-e_{2,2}+e_{3,3}-e_{4,4}+e_{5,5}-e_{6,6}+e_{7,7}-e_{8,8}\}.$$
\end{Ex}

\subsection{Homotopy type $\mathcal{H}(1,1)$}\label{sec:h11}

\textit{Notation}: Throughout this section, for a given graph $G,$ we denote its vertex set by $V(G)$ and its edge set by $E(G).$

\medskip
When $\g$ has homotopy type $\mathcal{H}(1,1),$ 
the core and peak blocks have dimension one, so $CM(\g)=M(\g)$.  Moreover, 
$M(\g)$ must consist of two paths, say 
$P_1$ and $P_2$. Let $P_1$ be the path which contains $v_1$.  Now define the one-form $\varphi_{(1,1)}\in \Phi$ as follows:

\begin{eqnarray}\label{summation}
\varphi_{(1,1)}=\sum_{(v_i,v_j)\in E(\overrightarrow{M}(\g))}e_{i,j}^*+\sum_{v_i\in V(P_1)}e_{i,i}^*.
\end{eqnarray}

\noindent
Note that all the terms in the first summation of (\ref{summation}) can be read directly from the edges of the directed meander in the obvious way. See Example \ref{ex:h11form}.


\begin{Ex}\label{ex:h11form}
Consider the seaweed $\mathfrak{g}=\mathfrak{p}_5^A\frac{1|4}{3|1|1}$ with directed meander shown in Figure~\ref{fig:H11ex} below. Note that $\mathfrak{g}$ has homotopy type $\mathcal{H}(1,1),$ so the above construction yields the regular one-form $$\varphi_{(1,1)}=e_{1,3}^*+e_{4,3}^*+e_{5,2}^*+e_{1,1}^*+e_{3,3}^*+e_{4,4}^*.$$

\begin{figure}[H]
$$\begin{tikzpicture}
\def\Node{\node [circle, fill, inner sep=1.5pt]}
\tikzset{->-/.style={decoration={
  markings,
  mark=at position .55 with {\arrow{>}}},postaction={decorate}}}
  \Node[label=below:\footnotesize {$v_1$}] at (0,0) {};
  \Node[label=below:\footnotesize {$v_2$}] at (1,0) {};
  \Node[label=below:\footnotesize {$v_3$}] at (2,0) {};
  \Node[label=below:\footnotesize {$v_4$}] at (3,0) {};
  \Node[label=below:\footnotesize {$v_5$}] at (4,0) {};
  \draw[->-] (0,0) to[bend right=60] (2,0);
  \draw[->-] (4,0) to[bend right=60] (1,0);
  \draw[->-] (3,0) to[bend right=60] (2,0);
\end{tikzpicture}$$
    \caption{\centering The directed meander $\protect\overrightarrow{M}(\mathfrak{g})$}
    \label{fig:H11ex}
\end{figure}
\end{Ex}

\subsubsection{The kernel of $\varphi_{(1,1)}$ }

\noindent By construction, the one-form $\varphi_{(1,1)}$ is regular. As in the $\mathcal{H}(2)$ case, we can also describe the kernel of its associated Kirillov form. 
See Theorem \ref{thm:h11ker}.

\begin{theorem}\label{thm:h11ker}
If $\mathfrak{g}$ has homotopy type $\mathcal{H}(1,1)$ with $\varphi_{(1,1)}$  defined as in \textup{(\ref{summation})}, then $$\ker(B_{\varphi_{(1,1)}})=\textup{span}\{h\},$$ where $$h=|V(P_2)|\sum_{i\in V(P_1)}e_{i,i}-|V(P_1)|\sum_{j\in V(P_2)}e_{j,j}.$$
\end{theorem}
\begin{proof}
Since $\varphi_{(1,1)}$ is regular on $\mathfrak{g},$ we need only show that $h\in\ker(B_{\varphi_{(1,1)}}).$  We first establish that $h\in\mathfrak{g}$ by noting that $tr(h)=|V(P_2)||V(P_1)|-|V(P_1)||V(P_2)|=0$. Now, to see that $h\in\ker(B_{\varphi_{(1,1)}}),$ consider the following:
\begin{itemize}
    \item $\varphi_{(1,1)}([h,e_{i,i}-e_{i+1,i+1}])=0,$ for all $1\leq i\leq |V(P_1)|+|V(P_2)|-1,$
    \item $\varphi_{(1,1)}([h,e_{i,j}])=0,$ for all $e_{i,j}\in\mathfrak{g}$ such that $e_{i,j}^*$ is not a summand of $\varphi_{(1,1)},$
    \item $\varphi_{(1,1)}([h,e_{i,j}])=\varphi_{(1,1)}(|V(P_2)|(e_{i,j}-e_{i,j}))=0,$ for all $e_{i,j}\in\mathfrak{g}$ such that $(v_i,v_j)\in E(P_1),$ and
    \item $\varphi_{(1,1)}([h,e_{i,j}])=\varphi_{(1,1)}(|V(P_1)|(e_{i,j}-e_{i,j}))=0,$ for all $e_{i,j}\in\mathfrak{g}$ such that $(v_i,v_j)\in E(P_2).$
\end{itemize}
The result follows.
\end{proof}

\begin{Ex}
Returning to the seaweed $\mathfrak{g}=\mathfrak{p}_5^A\frac{1|4}{3|1|1}$ from Example~\ref{ex:h11form}, we have that $$\ker(B_{\varphi_{(1,1)}})=\textup{span}\{2e_{1,1}-3e_{2,2}+2e_{3,3}+2e_{4,4}-3e_{5,5}\}.$$
\end{Ex}

\section{Main results}\label{sec:main}
In this section, we establish the main result of this article. We first need some notation and a few general results about contact Lie algebras.

\medskip
\noindent
\textit{Notation:  In this section, we will use 
$\mathfrak{f}$ to denote an  arbitrary Lie algebra and we will continue with our established convention of letting $\mathfrak{g}$ represent a type-$A$ seaweed.}

\medskip

Let $\mathfrak{f}$ be a $(2k+1)-$dimensional contact Lie algebra with contact form $\varphi.$ Fix an ordered basis\linebreak $\mathscr{B}=\{E_1,\dots,E_{2k+1}\}$ of $\mathfrak{f}$ and define $C(\mathfrak{f},\mathscr{B})=([E_i,E_j])_{1\leq i,j\leq 2k+1}$ to be the \textit{commutator matrix} associated to $\mathfrak{f}$ and indexed by basis $\mathscr{B}$. The contact form $\varphi$ can be applied to each element of $C(\mathfrak{f},\mathscr{B})$ to yield the matrix
  $$[B_{\varphi}]=\varphi(C(\mathfrak{f},\mathscr{B})).$$  Denote by $[\varphi]=(x_1\dots x_{2k+1})^t$ the coordinate vector in $\mathbb{C}^{2k+1}$ such that $\varphi=\sum_{i=1}^{2k+1} x_iE_i^*$ where $\{E_1^*,\dots,E_{2k+1}^*\}$ is the ``dual basis'' associated to $\mathscr{B}$ and consider the square 
   $(2k+2)-$dimensional skew-symmetric matrix $$\left[\widehat{B}_{\varphi}\right]=\begin{bmatrix}
0 & [\varphi]^t\\
-[\varphi] & \varphi(C(\mathfrak{\mathfrak{f}}))
\end{bmatrix}.$$    

\noindent
A straightforward computation yields the following technical lemma.

\begin{lemma}[Salgado \textbf{\cite{Sally}}, 2019]\label{lem:det}
Let $\mathfrak{f}$ be a Lie algebra with $\dim\mathfrak{g}=2k+1$ and $\varphi\in\mathfrak{f}^*.$ Using the notation developed above, $$\varphi\wedge(d\varphi)^k=\det\left(\left[\widehat{B}_{\varphi}\right]\right) E_1^*\wedge \dots \wedge E_{2k+1}^*. $$ Therefore, $\varphi$ is a contact form on $\mathfrak{f}$ if and only if $$\det\left(\left[\widehat{B}_{\varphi}\right]\right)\neq 0.$$
\end{lemma}

\noindent From Lemma~\ref{lem:det} above, we obtain the following useful characterization of a contact Lie algebra.

\begin{lemma}\label{lem:kernel}
An index-one Lie algebra $\mathfrak{f}$ is contact if and only if there exists a regular one-form $\varphi\in\mathfrak{f}^*$ for which there is an element $x\in\mathfrak{f}$ with $\ker(B_{\varphi})=span\{ x\}$ and $\varphi(x)\neq 0.$
\end{lemma}
\begin{proof}
To establish the forward implication, we need only recall that every contact form $\varphi$ has a unique Reeb vector, $x_{\varphi}\in\mathfrak{f}$ defined by the equations $$B_{\varphi}(x_{\varphi},-)=0$$ and $$\varphi(x_{\varphi})=1.$$ 

For the reverse implication, let $\varphi\in\mathfrak{f}^*$ be a regular one-form, i.e., $\dim\ker(B_{\varphi})=\ind\mathfrak{f}=1.$ Let $x\in\mathfrak{f}$ denote a generator of $\ker(B_{\varphi}),$ then extend $x$ to a basis of $\mathfrak{f},$ and consider $\det\left[\widehat{B}_{\varphi}\right]$ with respect to this basis. Computing, we have that $$\det\left[\widehat{B}_{\varphi}\right]=\varphi(x)^2\det\left[B'_{\varphi}\right],$$ where $B'_{\varphi}$ is the submatrix of $\left[B_{\varphi}\right]$ with the row and column indexed by $\varphi(x)$ removed. Since $\varphi$ is a regular one-form, $B'_{\varphi}$ has full rank, so $\det\left[B'_{\varphi}\right]\neq 0.$ The result follows from Lemma~\ref{lem:det}.
\end{proof}

We are now in a position to prove the main theorem of this article.

\begin{theorem}\label{thm:main}
If $\mathfrak{g}$ is a type-A seaweed, then $\mathfrak{g}$ is contact if and only if $\ind\mathfrak{g}=1.$
\end{theorem}
\begin{proof}

Let $\mathfrak{g}$ be an index-one seaweed subalgebra of $\mathfrak{sl}(n)$. Recall from Theorem~\ref{cor:index A} that this means $\mathfrak{g}$ has one of the following homotopy types: $\mathcal{H}(2)$ or $\mathcal{H}(1,1)$. We proceed by treating each homotopy type as its own case, using the constructions and notation of Section~\ref{sec:framework} and then applying Lemma~\ref{lem:kernel}.

\medskip
\noindent
\textbf{Case 1:} $\mathfrak{g}$ has homotopy type $\mathcal{H}(2).$ Using the notation of Section~\ref{sec:h2}, we claim that $\varphi_{(2)}$ is a contact form on $\mathfrak{g}.$ Recall from Theorem~\ref{thm:h2a} that $$\ker(B_{\varphi_{(2)}})=\textup{span}\{k\}=\textup{span}\left\{\sum_{i=1}^n(-1)^{i+1}e_{i,i}\right\},$$ and notice that $\varphi_{(2)}(k)=\frac{n}{2}\neq 0.$ An application of Lemma~\ref{lem:kernel} establishes the claim.

\medskip
\noindent
\textbf{Case 2:} $\mathfrak{g}$ has homotopy type $\mathcal{H}(1,1).$ Using the notation of Section~\ref{sec:h11}, we claim that $\varphi_{(1,1)}$ is a contact form on $\mathfrak{g}.$ Recall from Theorem~\ref{thm:h11ker} that $$\ker(B_{\varphi_{(1,1)}})=\textup{span}\{h\}=\textup{span}\left\{|V(P_2)|\sum_{i\in V(P_1)}e_{i,i}-|V(P_1)|\sum_{j\in V(P_2)}e_{j,j}\right\},$$ and notice that $\varphi_{(1,1)}(h)=|V(P_2)||V(P_1)|\neq 0.$ An application of Lemma~\ref{lem:kernel} establishes the claim.
\end{proof}

The following corollary identifies the only $n$ for which $\mathfrak{sl}(n,\mathbb{C})$ is contact, providing another proof for a well-known result (cf. \textbf{\cite{RG}}).

\begin{corollary}
The Lie algebra $\mathfrak{sl}(n,\mathbb{C})$ is contact if and only if $n=2.$
\end{corollary}
\begin{proof}
Note that $\mathfrak{sl}(n,\mathbb{C})$ is a type-$A$ seaweed; in particular, it is the seaweed $\mathfrak{p}_n^A\frac{n}{n}.$ Further, its meander consists of $\lfloor\frac{n}{2}\rfloor$ cycles and, if $n$ is odd, a degenerate path. By Theorem 1, we have that $\ind\mathfrak{sl}(n,\mathbb{C})=1$ if and only if $n=2,$ and the result follows from an application of Theorem~\ref{thm:main}.
\end{proof}

\section{Examples}\label{sec:examples}
Theorem~\ref{thm:indexA} provides an elegant combinatorial formula for the index -- and is critical to the proof heuristics of our main Theorem~\ref{thm:main} -- but to use it to quickly identify, or construct, contact seaweeds is nettlesome, having to first construct the associated meander and then count the number and type of connected components.  Fortunately, for seaweeds consisting of a small number of parts, one can determine the index with some dispatch. In particular, the following theorem provides an explicit index formula presented in terms of a linear common divisor of two arguments, each of which is a linear combination of the terms in the seaweed's defining composition (see \textbf{\cite{Cam}}).  We can use these formulas to manufacture an unlimited supply of contact seaweeds, of arbitrarily large dimension. 

\begin{theorem}[Coll et al. \textbf{\cite{meanders2}}, 2015]\label{thm:3parts}
The seaweed $\mathfrak{p}_n^A\frac{a|b|c}{n}$ has index $\gcd(a+b,b+c)-1.$
\end{theorem}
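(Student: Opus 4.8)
The plan is to invoke Theorem~\ref{index A}, so that the statement reduces to proving that the meander of $\mathfrak{p}_n^A\frac{a|b|c}{n}$ (where $n=a+b+c$) satisfies $2C+P=\gcd(a+b,b+c)$. First I would encode the meander by two involutions of the vertex set $\{1,\dots,n\}$: the \emph{bottom involution} $\beta(i)=n+1-i$, coming from the single bottom block (a full rainbow), and the \emph{top involution} $\tau$, which reflects each of the three top blocks internally and fixes the center of any odd block. Since every vertex has degree at most two, each connected component of $M(\g)$ is a path or a cycle, and the components are exactly the orbits of $\langle\tau,\beta\rangle$.

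The central device is the permutation $\sigma=\tau\beta$. I would first prove a general bookkeeping lemma valid for any meander: a cycle component contributes exactly two $\sigma$-orbits, while a path component (including a degenerate one) contributes exactly one, so that the number of $\sigma$-orbits equals $2C+P$. This follows from the dihedral action of $\langle\tau,\beta\rangle$ on a single component: on a cycle with $k$ top arcs, $\sigma$ is a rotation by two edges along the $2k$-vertex cycle, which always splits it into two orbits of size $k$; on a path, $\tau$ and $\beta$ generate the full dihedral group and $\sigma$ acts as a single cycle on the vertices.

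Next I would compute $\sigma$ explicitly. Partitioning $\{1,\dots,n\}$ according to whether $\beta(i)$ lands in the first, second, or third top block yields the three regions $\{1,\dots,c\}$, $\{c+1,\dots,b+c\}$, and $\{b+c+1,\dots,n\}$, on which $\sigma$ acts respectively by the translations $i\mapsto i+(a+b)$, $i\mapsto i+(a-c)$, and $i\mapsto i-(b+c)$; a direct check shows these formulas remain valid even when $\beta(i)$ is a fixed point of $\tau$ (or the center of the bottom block when $n$ is odd). Setting $d=\gcd(a+b,b+c)$ and using $a-c=(a+b)-(b+c)$, every displacement is a multiple of $d$, so $\sigma$ preserves residues modulo $d$ and hence has at least $d$ orbits.

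The crux, and the step I expect to be the main obstacle, is the reverse bound: that $\sigma$ acts transitively on each residue class modulo $d$, so that it has exactly $d$ orbits. Because $\gcd(a+b,b+c)=d$, the shifts $+(a+b)$ and $-(b+c)$ already generate $d\mathbb{Z}$, but they are only available on restricted subintervals, so transitivity is not automatic and must be argued with care. I would establish it by induction on $\max(a+b,b+c)$, exhibiting a reduction step that decreases this quantity while preserving both $d$ and the number of $\sigma$-orbits -- in effect running the Euclidean algorithm at the level of the interval-exchange $\sigma$. Combining the bookkeeping lemma with the fact that $\sigma$ has exactly $d$ orbits gives $2C+P=d$, and Theorem~\ref{index A} then yields $\ind\g=d-1=\gcd(a+b,b+c)-1$.
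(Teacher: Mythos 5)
First, a point of context: the paper does not prove this theorem at all --- it is imported verbatim from Coll--Hyatt--Magnant--Wang \cite{meanders2}, so your attempt can only be measured against that source, whose proof runs by induction on component-preserving ``winding-down'' moves on the meander, with the gcd tracked as the invariant of a Euclidean-style reduction. Your bookkeeping is sound and checks out in detail: with $\beta(i)=n+1-i$ and $\tau$ the blockwise reflection, the components of $M(\g)$ are indeed the $\langle \tau,\beta\rangle$-orbits; a cycle of length $2k$ contributes two $\sigma$-orbits (rotation by two steps) and a path a single $\sigma$-orbit (composition of two reflections, verifiable on small cases and in general), so $2C+P$ equals the number of orbits of $\sigma=\tau\beta$; and your interval-exchange formulas are correct --- $\sigma(i)=i+(a+b)$ on $\{1,\dots,c\}$, $\sigma(i)=i+(a-c)$ on $\{c+1,\dots,b+c\}$, and $\sigma(i)=i-(b+c)$ on $\{b+c+1,\dots,n\}$, uniformly valid at the $\tau$- and $\beta$-fixed points. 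Since all displacements lie in $d\mathbb{Z}$ with $d=\gcd(a+b,b+c)$, you correctly obtain the lower bound: $\sigma$ has at least $d$ orbits, hence $\ind\g\geq d-1$.

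The genuine gap is exactly where you flagged it, and it is not a small one: the upper bound --- that $\sigma$ is transitive on each residue class modulo $d$ --- is the entire content of the theorem, and your proposal only promises ``a reduction step that decreases $\max(a+b,b+c)$ while preserving both $d$ and the number of $\sigma$-orbits'' without exhibiting it. That step is precisely the hard combinatorial lemma that the cited proof supplies (in meander language rather than permutation language): one must show, with case analysis depending on the relative sizes of $a$, $b$, $c$ (e.g.\ whether $a\geq b+c$, whether $b=0$, degenerate/odd-block cases), that the three-interval exchange for $(a,b,c)$ is orbit-equivalent to the one for a strictly smaller composition with the same gcd, and one must also nail down the base cases of the induction. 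As written, your argument proves only $\ind\g\geq\gcd(a+b,b+c)-1$; the framework you set up is viable (and the $\sigma=\tau\beta$ orbit-counting reformulation is an attractive repackaging of the standard meander-moves argument), but until the reduction lemma is stated and verified, the proof is incomplete at its crux.
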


\noindent
So, to generate contact seaweeds of the form $\mathfrak{p}_n^A\frac{a|b|c}{n},$ we need only ensure $\gcd(a+b,b+c)=2.$  This is easy to do; consider, for example, $\mathfrak{p}_5^A\frac{1|1|3}{5}$, $\mathfrak{p}_7^A\frac{1|3|3}{7}$, $\mathfrak{p}_{12}^A\frac{4|2|6}{12},$ etc.  Moreover, if $b=0$, an immediate corollary to Theorem~\ref{thm:3parts} gives an index formula in the maximal parabolic case.

\begin{theorem}[Elashvili \textbf{\cite{Elashvili}}, 1990]\label{thm:elashvili}
The seaweed $\mathfrak{p}_n^A\frac{a|c}{n}$ has index $\gcd(a,c)-1.$
\end{theorem}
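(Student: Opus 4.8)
The plan is to obtain this as the degenerate case $b=0$ of Theorem~\ref{thm:3parts}, which the preceding paragraph already flags as an immediate corollary. The key observation is that the two-part seaweed $\mathfrak{p}_n^A\frac{a|c}{n}$ is \emph{literally} the three-part seaweed $\mathfrak{p}_n^A\frac{a|b|c}{n}$ with $b=0$: a block of size zero in the composition $(a,0,c)$ imposes no additional constraints and contributes nothing to the block-diagonal algebra, so $D_{(a,0,c)}=D_{(a,c)}$ and the two seaweeds coincide as subalgebras of $M_n(\C)$ with $n=a+c$. First I would record this identification explicitly so that Theorem~\ref{thm:3parts} applies verbatim.

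With the identification in hand, the computation is a one-line substitution: plugging $b=0$ into the formula $\gcd(a+b,b+c)-1$ gives $\gcd(a+0,0+c)-1=\gcd(a,c)-1$, which is exactly the claimed index. No further work is needed once the degenerate composition is seen to be legitimate.

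The only point requiring care — and it is genuinely minor — is whether Theorem~\ref{thm:3parts} remains valid when one part of the defining composition vanishes, since compositions are sometimes tacitly taken to have positive parts. I expect this to be the ``hard'' (really, the only) step, and it is resolved at the level of meanders: a top block of size zero simply omits no vertices beyond relabeling, so the meander $M(\mathfrak{p}_n^A\frac{a|0|c}{n})$ is identical to $M(\mathfrak{p}_n^A\frac{a|c}{n})$, and Theorem~\ref{index A} therefore counts the same cycles and paths for both. Because the underlying combinatorial object is unchanged, the index formula transfers without modification and the corollary follows.
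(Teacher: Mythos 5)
Your proposal is correct and takes essentially the same route as the paper, which presents Theorem~\ref{thm:elashvili} precisely as the immediate $b=0$ specialization of Theorem~\ref{thm:3parts}, via the substitution $\gcd(a+0,0+c)-1=\gcd(a,c)-1$. Your additional check that a zero part in the composition changes neither the seaweed nor its meander is a sound (and slightly more careful) justification of the degenerate case than the paper's one-line remark.
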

\noindent
Using Theorem \ref{thm:elashvili}, the reader will have no difficulty constructing examples of maximally parabolic type-$A$ contact seaweeds.
Of course, Theorems~\ref{thm:3parts} and~\ref{thm:elashvili} can also be used to generate Frobenius seaweeds since, by Theorem \ref{thm:indexA}, their associated meanders must consist of a single path. Here are a couple of examples: $\mathfrak{p}_5^A\frac{2|3}{5}$ and $\mathfrak{p}_8^A\frac{1|2|5}{8}.$ 
See Figure~\ref{fig:frob},  where the meanders of these two Frobenius seaweeds are displayed.

\begin{figure}[H]
$$\begin{tikzpicture}
\def\Node{\node [circle, fill, inner sep=1.5pt]}
\Node[label=below:{$v_1$}] at (0,0){};
\Node[label=below:{$v_2$}] at (1,0){};
\Node[label=below:{$v_3$}] at (2,0){};
\Node[label=below:{$v_4$}] at (3,0){};
\Node[label=below:{$v_5$}] at (4,0){};
\node[color=white] at (0,-1.7){};
\draw (0,0) to[bend right=60] (4,0);
\draw (0,0) to[bend left=60] (1,0);
\draw (1,0) to[bend right=60] (3,0);
\draw (2,0) to[bend left=60] (4,0);
\end{tikzpicture}
\hspace{5em}
\begin{tikzpicture}[scale=0.8]
\def\Node{\node [circle, fill, inner sep=1.5pt]}
\Node[label=below:{$v_1$}] at (0,0){};
\Node[label=below:{$v_2$}] at (1,0){};
\Node[label=below:{$v_3$}] at (2,0){};
\Node[label=below:{$v_4$}] at (3,0){};
\Node[label=below:{$v_5$}] at (4,0){};
\Node[label=below:{$v_6$}] at (5,0){};
\Node[label=below:{$v_7$}] at (6,0){};
\Node[label=below:{$v_8$}] at (7,0){};
\draw (0,0) to[bend right=60] (7,0);
\draw (1,0) to[bend right=60] (6,0);
\draw (1,0) to[bend left=60] (2,0);
\draw (2,0) to[bend right=60] (5,0);
\draw (3,0) to[bend right=60] (4,0);
\draw (3,0) to[bend left=60] (7,0);
\draw (4,0) to[bend left=60] (6,0);
\end{tikzpicture}$$
\caption{Meanders}
\label{fig:frob}
\end{figure}

\end{document}